\newcommand{\lra}{\longrightarrow}
\newcommand{\RR}{\mathbb{R}}
\newcommand{\vep}{\varepsilon}
\newcommand*{\defeq}{\mathrel{\rlap{%
                     \raisebox{0.3ex}{$\m@th\cdot$}}%
                     \raisebox{-0.3ex}{$\m@th\cdot$}}%
                     =}
\newtheorem{prop}{Proposition}
\newtheorem*{definition-non}{Definition}
\newtheorem*{theorem-non}{Theorem}
\newtheorem*{proposition-non}{Proposition}
\newtheorem*{lemma-non}{Lemma}
\newtheorem*{corollary-non}{Corollary}
\newcommand{\beqa}{\begin{eqnarray}}
\newcommand{\beq}{\begin{equation}}
\newcommand{\eeqa}{\end{eqnarray}}
\newcommand{\eeq}{\end{equation}}
\newcommand\ip[2]{g({#1},{#2})} 
\newcommand\vv[1]{{\boldsymbol {\it #1}}} 
\newcommand\ii{i}
\newcommand\imp{\hspace{.2in}\Rightarrow\hspace{.2in}}
\newcommand\mb{\overline{\boldsymbol m}}
\newcommand\mm{{\boldsymbol m}}
\newcommand\kk{T}
\newcommand\xx{X}
\newcommand\yy{Y}
\newcommand\cd[2]{\nabla_{\!#1}{#2}}
\newcommand\cds[2]{\nabla^{\scriptscriptstyle L}_{\!#1}{#2}}
\newcommand\cdo[2]{\nabla^{\scriptscriptstyle o}_{\!#1}{#2}}
\newcommand\comma{\hspace{.2in},\hspace{.2in}}
\begin{document}
\title[]{On the Einstein condition for Lorentzian 3-manifolds}
\author[]{Amir Babak Aazami}
\address{Clark University\hfill\break\indent
Worcester, MA 01610}
\email{Aaazami@clarku.edu}

\maketitle
\begin{abstract}
It is well known that in Lorentzian geometry there are no compact spherical space forms; in dimension 3, this means there are no closed Einstein 3-manifolds with positive Einstein constant.  We generalize this fact here, by proving that there are also no closed Lorentzian 3-manifolds $(M,g)$ whose Ricci tensor satisfies
$$
\text{Ric} = fg+(f-\lambda)T^{\flat}\otimes T^{\flat},
$$
for any unit timelike vector field $T$, any positive constant $\lambda$, and any function $f$ that never takes the values $0,\lambda$. (Observe that this reduces to the positive Einstein case when $f = \lambda$.)  We show that there is no such obstruction if $\lambda$ is negative. Finally, the ``borderline" case $\lambda = 0$ is also examined: we show that if $\lambda = 0$, then $(M,g)$ must be isometric to $(\mathbb{S}^1\!\times \!N,-dt^2\oplus h)$ with $(N,h)$ a Riemannian manifold.
\end{abstract}

\section{Introduction}
\label{sec:Intro}
The goal of this paper is to generalize the nonexistence of positive Lorentzian Einstein metrics in dimension 3, by proving the following
\begin{theorem-non}
\label{thm:1}
Let $M$ be a closed 3-manifold, $\lambda > 0$ a constant, and $f$ a smooth function that never takes the values $0,\lambda$.  Then there is no Lorentzian metric $g$ on $M$ whose Ricci tensor satisfies
\beqa
\label{eqn:1}
\emph{\text{Ric}} = fg+(f-\lambda)T^{\flat}\otimes T^{\flat},
\eeqa
for any unit timelike vector field $T$. If \eqref{eqn:1} holds with $\lambda = 0$, then $(M,g)$ is isometric to $(\mathbb{S}^1\!\times N,-dt^2\oplus h)$ with $(N,h)$ a Riemannian manifold.
\end{theorem-non}

The following remarks help to set this result in context:
\vskip 6pt
\begin{enumerate}[leftmargin=*]

\item[1.] Although our Theorem does not cover the case $f = \lambda$, nevertheless this would reduce \eqref{eqn:1} to the Einstein condition $\text{Ric} = \lambda g$ with positive Einstein constant.  In dimension 3, this is equivalent to $(M,g)$ having constant positive sectional curvature\,---\,and it is well known that there are \emph{no} such (closed) Lorentzian manifolds.   (Its proof came in two stages: \cite{CM} showed that there are no geodesically complete such manifolds, in any dimension; \cite{Klingler} then showed that every closed constant curvature Lorentzian manifold must be geodesically complete, with the flat case having already been established in \cite{carriere}.  See \cite{lundberg} for a comprehensive account.) It is in this sense that \eqref{eqn:1} generalizes the nonexistence of positive Einstein metrics; but more than that, it also shows that certain so called ``quasi-Einstein" metrics are also impossible.  Recall that a (semi-Riemannian) metric $g$ is \emph{quasi-Einstein} if there is a vector field $X$, a constant $\mu$, and a constant $m > 0$ such that
\beqa
\label{eqn:qEin}
\text{Ric} = \mu g - \frac{1}{2}\mathfrak{L}_Xg + \frac{1}{m}X^{\flat}\otimes X^{\flat};
\eeqa
see, e.g., \cite{limoncu}. Now observe that if $X$ is a unit timelike Killing vector field (i.e., $\mathfrak{L}_Xg = 0$), then \eqref{eqn:1} provides global obstructions to the existence of certain quasi-Einstein metrics \eqref{eqn:qEin}.

\item[2.] The condition imposed on $\lambda$ deserves commentary.  When $\lambda > 0$, then the endomorphism of the normal bundle $T^{\perp}$ given by \eqref{eqn:DT} below will have, presuming \eqref{eqn:1} holds, \emph{real} eigenvalues, and this leads directly to the existence of a distinguished frame on $(M,g)$ that is crucial to our proof.  This will not be the case if $\lambda < 0$\,---\,nor should this be surprising: there are in fact many examples of Lorentzian Einstein metrics with \emph{negative} Einstein constant (see, e.g., \cite{kulkarni} and \cite{Goldman}), so that a result like \eqref{eqn:1} should not be expected when $\lambda < 0$. Indeed, as we show in Section \ref{sec:NP} below, the Lorentzian metric $g_{\scriptscriptstyle L}$ on $\mathbb{S}^3$ defined via the standard (round) Riemannian metric $\mathring{g}$ by
\beqa
\label{eqn:LS3}
g_{\scriptscriptstyle L} \defeq \mathring{g} - 2\mathring{g}(T,\cdot) \otimes \mathring{g}(T,\cdot),
\eeqa
where $T$ is the Hopf Killing vector field, satisfies \eqref{eqn:1} with $f = 8, \lambda = -2$:
\beqa
\label{eqn:ricciS3}
\text{Ric}_{\scriptscriptstyle L} = 8g_{\scriptscriptstyle L} + (8-(-2))T^\flat \otimes T^\flat.
\eeqa
(Here $T^\flat = g_{\scriptscriptstyle L}(T,\cdot) = -\mathring{g}(T,\cdot)$.) Finally, the nowhere vanishing condition on $f$ arises for the following reason: our Theorem is obtained by showing that if \eqref{eqn:1} holds, then there necessarily exists a vector field $Z$ on $M$ and a smooth function $\psi$ such that $Z(\psi) = f$, which is impossible on a compact manifold when $f$ is nowhere vanishing\,---\,or else has just one zero.

\item[3.] As all compact 3-manifolds have Euler characteristic zero, there is no topological obstruction to $M$ admitting a Lorentzian metric (see, e.g., \cite{o1983}). Furthermore, \eqref{eqn:LS3} is but one instance of the following fact: every Lorentzian metric $g$ with a unit timelike vector field $T$ is necessarily of the form
$$
g = g_{\scriptscriptstyle R} - 2g_{\scriptscriptstyle R}(T,\cdot)\otimes g_{\scriptscriptstyle R}(T,\cdot)
$$
for some Riemannian metric $g_{\scriptscriptstyle R}$ on $M$ with $g_{\scriptscriptstyle R}(T,T) = 1$. In other words, there are many candidates for $g$ and $T$ on any closed 3-manifold $M$; nevertheless, by our Theorem, they will all fail to satisfy \eqref{eqn:1}.

\item[4.] Our proof uses the three-dimensional version of the \emph{Newman-Penrose formalism} \cite{newpen62}, which we outline in Section \ref{sec:NP}. This is a frame technique which has by now appeared in many guises in dimensions 3 and 4, both  Lorentzian and Riemannian; see, e.g., \cite{hall87,schmidt2014,NTC,AMB,bettiol2018}.  Indeed, our proof, which appears in Section \ref{sec:proof}, is a Lorentzian conversion of two Riemannian results in \cite{AMB}; Section \ref{sec:proof} will make their precise relationship clear.  The principal virtue of the Newman-Penrose formalism is that it converts second-order differential equations involving curvature into first-order differential equations involving properties of the flow of a privileged vector field, in our case the vector field $T$ in \eqref{eqn:1}. Doing so can, in certain settings, simplify the analysis considerably\,---\,our Theorem, it turns out, is precisely such a setting.
\item[5.] Observe that if \eqref{eqn:1} were to hold, then $\lambda,f,f$ would be the eigenvalues of the Ricci operator $\widehat{\text{Ric}}\colon TM \lra TM$, the smooth bundle endomorphism whose action $X \mapsto \widehat{\text{Ric}}(X)$ is the unique vector satisfying
$$
g(\widehat{\text{Ric}}(X),Y) = \text{Ric}(X,Y) \hspace{.2in}\text{for all}~Y \in TM.
$$
Much work has been done in \emph{locally} classifying Lorentzian 3-manifolds (i.e., on $\RR^3$) according to the possible eigenvalues of $\widehat{\text{Ric}}$, in particular when these eigenvalues are constants; see \cite{bueken2,bueken,calvaruso}.  Although we make no use of $\widehat{\text{Ric}}$ explicitly\,---\,in the Lorentzian setting, $\widehat{\text{Ric}}$ is not, in general, diagonalizable\,---\,nevertheless it provides another way to appreciate the generalization achieved in our Theorem, since the Einstein case $\text{Ric} = \lambda g$ corresponds to the eigenvalues $\lambda,\lambda,\lambda$.
\end{enumerate}

\section{The Newman-Penrose formalism for Lorentzian 3-manifolds}
\label{sec:NP}
Let $\kk$ be a smooth unit length timelike vector field defined in an open subset of a Lorentzian 3-manifold $(M,g)$ without boundary, so that $\cd{v}{\kk} \perp \kk$ for all vectors $v$ ($\nabla$ is the Levi-Civita connection, and we adopt the index $-++$ for the Lorentzian metric $g$).  Let $\xx$ and $\yy$ be two smooth spacelike vector fields such that $\{\kk,\xx,\yy\}$ is a local orthonormal frame, where by ``timelike" and ``spacelike" we mean simply that
$$
g(T,T) = -1 \comma g(X,X) = g(Y,Y) = 1.
$$
For such a $\kk$, there is an endomorphism $D$ defined on the normal bundle $\kk^{\perp} \subset TM$:
\beqa
\label{eqn:DT}
D\colon \kk^{\perp} \lra \kk^{\perp}\hspace{.2in},\hspace{.2in}v\ \mapsto\ \cd{v}{\kk}.
\eeqa
As is well known, the matrix of $D$ with respect to the frame $\{\kk,\xx,\yy\}$,
\beqa
\label{eqn:matrix2}
D = \begin{pmatrix}
        \ip{\cd{\xx}{\kk}}{\xx} & \ip{\cd{\yy}{\kk}}{\xx}\\
        \ip{\cd{\xx}{\kk}}{\yy} & \ip{\cd{\yy}{\kk}}{\yy}
    \end{pmatrix}\nonumber
\eeqa
carries three crucial pieces of information associated to the flow of $\kk$:
\begin{enumerate}[leftmargin=*]
\item[1.] The divergence of $\kk$, denoted $\text{div}\,\kk$, is the trace of $D$.
\item[2.] By Frobenius's theorem, $\kk^{\perp}$ is integrable if and only if the off-diagonal elements of \eqref{eqn:matrix} satisfy
\beqa
\label{eqn:rotation}
\omega \defeq \ip{\kk}{[\xx,\yy]} = \ip{\cd{\yy}{\kk}}{\xx} - \ip{\cd{\xx}{\kk}}{\yy} = 0.
\eeqa
Since $\omega^2$ equals the determinant of the anti-symmetric part of $D$ (see \eqref{eqn:matrix} below), $\omega$ is invariant up to sign, called the \emph{twist} of $\kk$.  We say that the flow of $\kk$ is \emph{twist-free} if $\omega = 0$.
\item[3.] The third piece of information is the \emph{shear} $\sigma$ of $\kk$; it is given by the trace-free symmetric part of $D$, whose components $\sigma_1,\sigma_2$ we combine here into a complex-valued quantity, for reasons that will become clear below:
\beqa
\hspace{.26in}\sigma\!\!&\defeq&\!\! \underbrace{\,\frac{1}{2}\Big(\ip{\cd{\yy}{\kk}}{\yy} - \ip{\cd{\xx}{\kk}}{\xx}\Big)\,}_{\sigma_1} +\ \ii\underbrace{\,\frac{1}{2}\Big(\ip{\cd{\yy}{\kk}}{\xx} + \ip{\cd{\xx}{\kk}}{\yy}\Big)\,}_{\sigma_2}.\nonumber\\
&&\label{eqn:shear}
\eeqa
Although $\sigma$ itself is not invariant, its magnitude $|\sigma^2|$ is: by \eqref{eqn:matrix} below, it is minus the determinant of the trace-free symmetric of $D$.  We say that the flow of $\kk$ is \emph{shear-free} if $\sigma = 0$.  As with being twist-free, being shear-free is a frame-independent statement. In terms of $\text{div}\,\kk$, $\omega$, and $\sigma$, $D$ is
\beqa
\label{eqn:matrix}
D = \begin{pmatrix}
        \frac{1}{2}\text{div}\,\kk - \sigma_1 & \sigma_2 + \frac{\omega}{2}\\
        \sigma_2 - \frac{\omega}{2} & \frac{1}{2}\text{div}\,\kk + \sigma_1
    \end{pmatrix}\cdot
\eeqa
Note that \eqref{eqn:DT} has been applied to arbitrary $n$-dimensional Lorentzian manifolds, yielding integral inequalities via Bochner's technique, in particular when $D$ is skew-symmetric, or (in dimension 3) when the vector field of interest is spacelike, in \cite{sanchez_bochner} and \cite{sanchez_bochner2}.
\end{enumerate}

We now present the Newman-Penrose formalism for Lorentzian 3-manifolds, which is well known; see \cite{hall87}, and more recently, \cite{NTC}. Our presentation here parallels the three-dimensional Riemannian treatment to be found in \cite{AMB}, and is meant to fix notation; in what follows, any sign changes that arise due to the Lorentzian index, as compared to the Riemannian case in \cite{AMB}, are indicated by \textcolor{red}{red} text.  Let $\{\kk,\xx,\yy\}$ be as above and define the complex-valued quantities
\beqa
\label{eqn:complex}
\mm \defeq \frac{1}{\sqrt{2}}(\xx-\ii \yy)\hspace{.2in},\hspace{.2in}\mb \defeq \frac{1}{\sqrt{2}}(\xx+\ii \yy).\nonumber
\eeqa
Henceforth we work with the complex frame $\{\kk,\mm,\mb\}$, for which only $\ip{\kk}{\kk} = \textcolor{red}{-}1$ and $\ip{\mm}{\mb} = 1$ are nonzero.  The following quantities associated to this complex triad play a central role in all that follows.

\begin{definition-non}
\label{def:spin}
The {\rm spin coefficients} of the complex frame $\{\kk,\mm,\mb\}$ are the complex-valued functions
\beqa
\label{eqn:sc}
\kappa\!\!\! &=&\!\!\! -\ip{\cd{\kk}{\kk}}{\mm}\hspace{.2in},\hspace{.2in}\rho=-\ip{\cd{\mb}{\kk}}{\mm}\hspace{.2in},\hspace{.2in}\sigma=-\ip{\cd{\mm}{\kk}}{\mm},\nonumber\\
&&\hspace{.45in}\vep=\ip{\cd{\kk}{\mm}}{\mb}\hspace{.2in},\hspace{.2in}\beta=\ip{\cd{\mm}{\mm}}{\mb}.\nonumber
\eeqa
\end{definition-non}
Note that the flow of $\kk$ is geodesic, $\cd{\kk}{\kk} = 0$, if and only if $\kappa = 0$; that
$$
\vep = \ii \ip{\cd{\kk}{\xx}}{\yy}
$$
is purely imaginary; that $\sigma$ is actually the complex shear \eqref{eqn:shear}; and finally that the spin coefficient $\rho$ is given by
\beqa
\label{eqn:rho2}
-2\rho = \text{div}\,\kk + \ii\, \omega.
\eeqa
It is clear that $\kappa,\rho,\sigma$ directly represent the three geometric properties of the flow of $\kk$ discussed  above.  In terms of the five spin coefficients in Definition \ref{def:spin}, the covariant derivatives of $\{\kk,\mm,\mb\}$ are given by
\beqa
\cd{\kk}{\kk} \!\!&=&\!\! -\bar{\kappa}\,\mm - \kappa\,\mb,\nonumber\\
\cd{\mm}{\kk} \!\!&=&\!\! -\bar{\rho}\,\mm - \sigma\,\mb,\nonumber\\
\cd{\kk}{\mm} \!\!&=&\!\! \textcolor{red}{-}\kappa\,\kk + \vep\,\mm,\nonumber\\
\cd{\mm}{\mm} \!\!&=&\!\! \textcolor{red}{-}\sigma\,\kk + \beta\,\mm,\nonumber\\
\cd{\mm}{\mb} \!\!&=&\!\! \textcolor{red}{-}\bar{\rho}\,\kk - \beta\,\mb,\nonumber
\eeqa
while their Lie brackets are
\beqa
[\kk,\mm] \!\!&=&\!\! \textcolor{red}{-}\kappa\,\kk + (\vep + \bar{\rho})\,\mm + \sigma\,\mb,\nonumber\\ 
\,[\mm,\mb] \!\!&=&\!\! \textcolor{red}{-}(\bar{\rho} - \rho)\,\kk + \bar{\beta}\,\mm - \beta\,\mb.\nonumber
\eeqa
All other covariant derivatives, as well as the Lie bracket $[\kk,\mb]$, are obtained by complex conjugation.  Now onto curvature; begin by observing that the Riemann and Ricci tensors
\beqa
R(\vv{u},\vv{v},\vv{w},\vv{z}) \!\!&=&\!\! \ip{\cd{\vv{u}}{ \cd{\vv{v}}{\vv{w}}} - \cd{\vv{v}}{ \cd{\vv{u}}{\vv{w}}} - \cd{[\vv{u},\vv{v}]}{\vv{w}}}{\vv{z}},\nonumber\\
\text{Ric}(\cdot\,,\cdot) \!\!&=&\!\! \textcolor{red}{-}R(\kk,\cdot,\cdot,\kk) + R(\mm,\cdot,\cdot,\mb) + R(\mb,\cdot,\cdot,\mm),\nonumber
\eeqa
satisfy the following relationships in the complex frame $\{\kk,\mm,\mb\}$:
$$
\left\{
\begin{array}{rcl}
\text{Ric}(\mm,\mm) \!\!&=&\!\! \textcolor{red}{+}R(\kk,\mm,\kk,\mm),\nonumber\\
\text{Ric}(\kk,\kk) \!\!&=&\!\! -2R(\kk,\mm,\kk,\mb),\nonumber\\
\text{Ric}(\kk,\mm) \!\!&=&\!\! -R(\kk,\mm,\mm,\mb),\nonumber\\
\text{Ric}(\mm,\mb) \!\!&=&\!\! \textcolor{red}{-}\frac{1}{2}\text{Ric}(\kk,\kk) - R(\mb,\mm,\mm,\mb).\nonumber
\end{array}
\right.
$$
Using these, and expressing the Riemann tensor as
\beqa
R(\vv{u},\vv{v},\vv{w},\vv{z}) \!\!&=&\!\! \vv{u}\ip{\cd{\vv{v}}{\vv{w}}}{\vv{z}} - \ip{\cd{\vv{v}}{\vv{w}}}{\cd{\vv{u}}{\vv{z}}} - \vv{v}\ip{\cd{\vv{u}}{\vv{w}}}{\vv{z}}\nonumber\\
&& \hspace{1in} +\ \ip{\cd{\vv{u}}{\vv{w}}}{\cd{\vv{v}}{\vv{z}}} - \ip{\cd{[\vv{u},\vv{v}]}{\vv{w}}}{\vv{z}},\nonumber
\eeqa
leads to the following equations; along with \eqref{eqn:bid} and \eqref{eqn:bid2} below, play the crucial role in the Newman-Penrose formalism:
\beqa
\label{eqn:Sachs1}
\kk(\rho) - \mb(\kappa) \!\!&=&\!\! \textcolor{red}{-}|\kappa|^2 + |\sigma|^2 + \rho^2 + \kappa\bar{\beta} + \frac{1}{2} {\rm Ric}(\kk,\kk),\label{eqn:S1}\\
\kk(\sigma) - \mm(\kappa) \!\!&=&\!\! \textcolor{red}{-}\kappa^2 + 2\sigma\vep + \sigma(\rho + \bar{\rho}) - \kappa \beta \,\textcolor{red}{-}\, {\rm Ric}(\mm,\mm),\phantom{\frac{1}{2}}\label{eqn:S2}\\
\mm(\rho) - \mb(\sigma) \!\!&=&\!\! 2 \sigma\bar{\beta} \,\textcolor{red}{-}\, (\bar{\rho}-\rho)\kappa + {\rm Ric}(\kk,\mm),\phantom{\frac{1}{2}}\label{eqn:S3}\\
\kk(\beta) - \mm(\vep) \!\!&=&\!\! \sigma(\bar{\kappa} - \bar{\beta}) + \kappa (\textcolor{red}{-}\vep - \bar{\rho}) + \beta(\vep + \bar{\rho}) - {\rm Ric}(\kk,\mm),\phantom{\frac{1}{2}}\nonumber\\
\mm(\bar{\beta}) + \mb(\beta) \!\!&=&\!\! \textcolor{red}{-}\,|\sigma|^2\,\textcolor{red}{+}\, |\rho|^2 -2|\beta|^2 \,\textcolor{red}{-}\, (\rho - \bar{\rho})\vep - {\rm Ric}(\mm,\mb) \,\textcolor{red}{-}\, \frac{1}{2} {\rm Ric}(\kk,\kk).\nonumber\\\label{eqn:S4}
\eeqa

Finally, there are two nontrivial differential Bianchi identities,
\beqa
(\cd{\kk}{R})(\kk,\mm,\mm,\mb) + (\cd{\mm}{R})(\kk,\mm,\mb,\kk) + (\cd{\mb}{R})(\kk,\mm,\kk,\mm)=0,\nonumber\\
(\cd{\kk}{R})(\mb,\mm,\mm,\mb) + (\cd{\mm}{R})(\mb,\mm,\mb,\kk) + (\cd{\mb}{R})(\mb,\mm,\kk,\mm)=0,\nonumber
\eeqa
which, in terms of spin coefficients, take the following forms:
\beqa
&&\hspace{-.4in}\kk({\rm Ric}(\kk,\mm))\, -\, \frac{1}{2}\mm({\rm Ric}(\kk,\kk))\ \textcolor{red}{-}\ \mb({\rm Ric}(\mm,\mm))=\label{eqn:bid}\\
&&\hspace{-.1in}-\,\kappa\,\big(\textcolor{red}{+}{\rm Ric}(\kk,\kk) + \text{Ric}(\mm,\mb)\big) + \big(\vep + 2\rho + \bar{\rho}\big){\rm Ric}(\kk,\mm)\phantom{\frac{1}{2}}\nonumber\\
&&\hspace{1.3in}+\, \sigma\,{\rm Ric}(\kk,\mb)- \big(\bar{\kappa} \,\textcolor{red}{-}\, 2\bar{\beta}\big){\rm Ric}(\mm,\mm)\phantom{\frac{1}{2}}\nonumber\\
\text{and}&&\nonumber\\
&&\hspace{-.4in}\mm({\rm Ric}(\kk,\mb)) + \mb({\rm Ric}(\kk,\mm)) - \kk\big({\rm Ric}(\mm,\mb) \,\textcolor{red}{+}\, (1/2){\rm Ric}(\kk,\kk)\big) =\phantom{\frac{1}{2}}\nonumber\\
&&\hspace{-.1in}\textcolor{red}{-}\,(\rho+\bar{\rho})\big({\rm Ric}(\kk,\kk) \,\textcolor{red}{+}\, {\rm Ric}(\mm,\mb)\big) - \bar{\sigma}{\rm Ric}(\mm,\mm) - \sigma{\rm Ric}(\mb,\mb)\phantom{\frac{1}{2}}\label{eqn:bid2}\\
&&\hspace{1.2in} -\, \big(\textcolor{red}{-}2\bar{\kappa} + \bar{\beta}\big){\rm Ric}(\kk,\mm) - \big(\textcolor{red}{-}2\kappa + \beta\big){\rm Ric}(\kk,\mb).\phantom{\frac{1}{2}}\nonumber
\eeqa

E.g., to derive \eqref{eqn:bid2}, expand the second differential Bianchi identity, beginning with its first term:
\beqa
&&\hspace{-.1in}(\cd{\kk}{R})(\mb,\mm,\mm,\mb)=\kk(R(\mb,\mm,\mm,\mb))\,-\, R(\cd{\kk}{\mb},\mm,\mm,\mb)\nonumber\\
&&\hspace{.1in}\,-\, R(\mb,\cd{\kk}{\mm},\mm,\mb)\,-\, R(\mb,\mm,\cd{\kk}{\mm},\mb)\,-\, R(\mb,\mm,\mm,\cd{\kk}{\mb}).\nonumber
\eeqa
In terms of spin coefficients and the Ricci tensor, each term is
\beqa
\kk(R(\mb,\mm,\mm,\mb)) \!\!&=&\!\! -\kk({\rm Ric}(\mm,\mb)\,\textcolor{red}{+}\, \frac{1}{2}{\rm Ric}(\kk,\kk)),\phantom{\underbrace{R}_{0}}\nonumber\\
R(\cd{\kk}{\mb},\mm,\mm,\mb) \!\!&=&\!\! \textcolor{red}{-}\bar{\kappa}\,\underbrace{R(\kk,\mm,\mm,\mb)}_{-\text{Ric}(\kk,\mm)}\,+\ \bar{\vep}\!\!\!\!\underbrace{R(\mb,\mm,\mm,\mb)}_{-\text{Ric}(\mm,\mb)\,\textcolor{red}{-}\,\frac{1}{2}\text{Ric}(\kk,\kk)},\nonumber\\ 
&=&\!\! R(\mb,\mm,\mm,\cd{\kk}{\mb}),\phantom{\underbrace{R}_{0}}\nonumber\\
R(\mb,\cd{\kk}{\mm},\mm,\mb) \!\!&=&\!\! \textcolor{red}{-}\kappa\,\underbrace{R(\mb,\kk,\mm,\mb)}_{-\text{Ric}(\kk,\mb)}\ +\ \vep\!\!\!\!\underbrace{R(\mb,\mm,\mm,\mb)}_{-\text{Ric}(\mm,\mb)\,\textcolor{red}{-}\,\frac{1}{2}\text{Ric}(\kk,\kk)},\nonumber\\
 &=&\!\! R(\mb,\mm,\cd{\kk}{\mm},\mb),\phantom{\underbrace{R}_{0}}\nonumber
\eeqa
Thus the term $(\cd{\kk}{R})(\mb,\mm,\mm,\mb)$ simplifies to
$$
-\kk\big({\rm Ric}(\mm,\mb)\,\textcolor{red}{+}\, (1/2){\rm Ric}(\kk,\kk)\big)\, \textcolor{red}{-}\, 2\kappa \text{Ric}(T,\mb)\,\textcolor{red}{-}\,2\bar{\kappa} \text{Ric}(T,\mm), 
$$
where two terms cancel because $\vep + \bar{\vep} = 0$.  Repeating this process on the remaining terms in the second differential Bianchi identity yields \eqref{eqn:bid2}; the first differential Bianchi identity \eqref{eqn:bid} is similarly derived.  This concludes the derivation of the Newman-Penrose formalism for Lorentzian 3-manifolds.  As a first, minor application, let us verify that

\begin{lemma-non}
On $\mathbb{S}^3$, the Lorentzian metric $g_{\scriptscriptstyle L}$ given by \eqref{eqn:LS3} satisfies \eqref{eqn:ricciS3}.
\end{lemma-non}

\begin{proof}
Let $\mathring{g}$ denote the standard (round) metric on $\mathbb{S}^3$.  Then on $\RR^4 = \{(x^1,y^1,x^2,y^2)\}$, the restriction to $(\mathbb{S}^3,\mathring{g})$ of the vector field
$$
T = \sum_{i=1}^2 (-y^i\partial_{x^i} + x^i\partial_{y^i})
$$
is a unit-length Killing vector field, tangent to the Hopf fibration. With respect to the Lorentzian metric $g_{\scriptscriptstyle L}$ in \eqref{eqn:LS3},
\beqa
\label{eqn:LS3*}
g_{\scriptscriptstyle L} \defeq \mathring{g} - 2\mathring{g}(T,\cdot) \otimes \mathring{g}(T,\cdot),
\eeqa
the Koszul formula shows that $T$ remains a Killing vector field of unit length, but now a timelike one: $g_{\scriptscriptstyle L}(T,T) = -1$. As a consequence, and using the Koszul formula again, any local $g_{\scriptscriptstyle L}$-orthonormal frame $\{T,X,Y\}$ will have covariant derivatives
\beqa
\cds{X}{T} = -\cdo{X}{T} &\comma& \cds{Y}{T} = -\cdo{Y}{T},\nonumber\\
\cds{T}{X} = \cdo{T}{X} - 2\cdo{X}{T} &\comma& \cds{T}{Y} = \cdo{T}{Y} - 2\cdo{Y}{T},\nonumber\\
\cds{a}{b} = \cdo{a}{b}&&\hspace{-.5in}\text{for all other}~a,b \in \{T,X,Y\},\nonumber
\eeqa
where $\nabla^{\scriptscriptstyle L}$ is the Levi-Civita connection of $g_{\scriptscriptstyle L}$ and $\nabla^{\scriptscriptstyle o}$ that of $\mathring{g}$ (for more general formulae for Lorentzian metrics of the form \eqref{eqn:LS3*}, consult \cite{olea}). It now remains to compute the Ricci tensor, but in fact \eqref{eqn:S1}-\eqref{eqn:S3} will simplify the computation.  This is because any unit timelike Killing vector field $T$ on a Lorentzian 3-manifold will satisfy
$$
\kappa = \rho+\bar{\rho} = \sigma = 0
$$
with respect to any complex frame $\{T,\mm,\mb\}$ (in fact in dimension 3, such a vector field is completely determined by these equations). Inserting these into \eqref{eqn:S1}, its real part simplifies to
\beqa
\label{eqn:bochner}
\text{Ric}_{\scriptscriptstyle L}(T,T) = \frac{\omega_{\scriptscriptstyle L}^2}{2},
\eeqa
where the twist $\omega_{\scriptscriptstyle L}^2$ satisfies
$$
\omega_{\scriptscriptstyle L}^2 \overset{\eqref{eqn:rotation}}{=} g_{\scriptscriptstyle L}(T,[X,Y])^2 \overset{\eqref{eqn:LS3*}}{=} \mathring{g}(T,[X,Y])^2 =: \mathring{\omega}^2 = 4.
$$
(That $\mathring{\omega}^2 = 4$ follows from the Einstein condition $\text{Ric}_{\mathring{g}} = 2\mathring{g}$, together with the Riemannian version of \eqref{eqn:S1}, obtained from \eqref{eqn:S1} by changing each (red) ``$-$" to ``$+$"; doing so leads to $\text{Ric}_{\mathring{g}}(T,T) = \frac{\mathring{\omega}^2}{2}$ as in \eqref{eqn:bochner}.)  Next, \eqref{eqn:S2} and \eqref{eqn:S3} simplify, respectively, to
$$
\text{Ric}_{\scriptscriptstyle L}(\mm,\mm) =  0 \comma \text{Ric}_{\scriptscriptstyle L}(T,\mm) = 0,
$$
where in the latter equality we've used the  fact that $\mm(\rho) = 0$  because $\omega_{\scriptscriptstyle L}^2 = 4$ is a constant. All in all, we thus have that
$$
\text{Ric}_{\scriptscriptstyle L}(T,T) = 2 \comma \text{Ric}_{\scriptscriptstyle L}(X,X) = \text{Ric}_{\scriptscriptstyle L}(Y,Y),
$$
and $\text{Ric}_{\scriptscriptstyle L}(X,Y) = \text{Ric}_{\scriptscriptstyle L}(T,X) = \text{Ric}_{\scriptscriptstyle L}(T,Y) = 0$. The final step in the proof is to show that $\text{Ric}_{\scriptscriptstyle L}(X,X) = \text{Ric}_{\scriptscriptstyle L}(X,X)$ is in fact a global constant, which indeed it is, after one expands
$$
\text{Ric}_{\scriptscriptstyle L}(X,X) =  -\underbrace{\,R_{\scriptscriptstyle L}(T,X,X,T)\,}_{-1} + \underbrace{\,R_{\scriptscriptstyle L}(Y,X,X,Y)\,}_{7} = 8,
$$
where the computations of the curvature tensor components are straightforwardly carried out, using both the Einstein condition $\text{Ric}_{\mathring{g}} = 2\mathring{g}$ and the fact that $T$ is a unit Killing vector field in both $(\mathbb{S}^3,\mathring{g})$ and $(\mathbb{S}^3,g_{\scriptscriptstyle L})$. That $g_{\scriptscriptstyle L}$ satisfies \eqref{eqn:ricciS3} now follows.
\end{proof}

\section{Evolution equations for divergence, twist, and shear}
\label{sec:proof0}
We first need to gather some information regarding the flow of $T$; the first-order differential equations appearing here are, essentially, ``what the Newman-Penrose formalism is good for."
\begin{prop}
\label{prop:1}
Let $(M,g)$ be a Lorentzian 3-manifold whose Ricci tensor satisfies
$$
\emph{\text{Ric}} = fg+(f-\mu)T^{\flat}\otimes T^{\flat},
$$
for some unit timelike vector field $T$, constant $\mu$, and smooth function $f$ which never takes the value $\mu$.  Then $T$ has geodesic flow, and its divergence, twist, and shear satisfy the following differential equations:
\beqa
T(\emph{\text{div}}\,T) \!\!&=&\!\! \frac{\omega^2}{2} - 2|\sigma|^2 -\frac{1}{2}(\emph{\text{div}}\,T)^2 + \mu,\label{ODE:div}\\
T(\omega^2) \!\!&=&\!\! -2(\emph{\text{div}}\,T)\,\omega^2,\label{ODE:twist}\\
T(|\sigma|^2) \!\!&=&\!\! -2(\emph{\text{div}}\,T)\,|\sigma|^2.\label{ODE:shear}
\eeqa
Furthermore, $f$ satisfies
\beqa
\label{ODE:h}
T(f-\mu) = -(\emph{\text{div}}\,T)(f-\mu),
\eeqa
and, recalling \eqref{eqn:matrix}, the function $H \defeq \emph{\text{det}}\,D - \frac{\mu}{2}$ satisfies
\beqa
T(\emph{\text{div}}\,T) \!\!&=&\!\! 2H - (\emph{\text{div}}\,T)^2 + 2\mu,\label{ODE:H1}\\
T(H) \!\!&=&\!\! -(\emph{\text{div}}\,T)H.\label{ODE:H2}
\eeqa
\end{prop}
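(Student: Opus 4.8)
The plan is to read off the components of the Ricci tensor in the complex frame $\{T,\mm,\mb\}$ and substitute them into the Sachs equations \eqref{eqn:S1}--\eqref{eqn:S3} and the Bianchi identities \eqref{eqn:bid}--\eqref{eqn:bid2}. Since $T^{\flat}(T)=-1$, $T^{\flat}(\mm)=T^{\flat}(\mb)=0$, and $\ip{\mm}{\mm}=0$, $\ip{\mm}{\mb}=1$, the hypothesis $\text{Ric}=fg+(f-\mu)T^{\flat}\otimes T^{\flat}$ gives at once
$$
\text{Ric}(T,T)=-\mu,\quad \text{Ric}(\mm,\mm)=0,\quad \text{Ric}(T,\mm)=0,\quad \text{Ric}(\mm,\mb)=f,
$$
together with the conjugates $\text{Ric}(\mb,\mb)=\overline{\text{Ric}(\mm,\mm)}=0$ and $\text{Ric}(T,\mb)=\overline{\text{Ric}(T,\mm)}=0$. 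The key structural feature is that $\text{Ric}(T,T)=-\mu$ is a \emph{constant}, so any frame derivative of it vanishes.

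First I would prove that the flow of $T$ is geodesic. Feeding the components above into the first Bianchi identity \eqref{eqn:bid}, every term on the left is a frame derivative of either $0$ or the constant $-\mu$, hence the whole left side vanishes; on the right, every term except $-\kappa\big(\text{Ric}(T,T)+\text{Ric}(\mm,\mb)\big)$ carries a vanishing Ricci component as a factor. Thus \eqref{eqn:bid} collapses to $0=-\kappa(f-\mu)$, and since $f$ never takes the value $\mu$ we conclude $\kappa=0$. This is the one and only place the hypothesis $f\neq\mu$ enters, and it is the conceptual crux of the proposition.

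With $\kappa=0$ the remaining identities follow by separating real and imaginary parts. Writing $\rho=-\tfrac12(\text{div}\,T+\ii\,\omega)$ from \eqref{eqn:rho2}, equation \eqref{eqn:S1} reduces to $T(\rho)=|\sigma|^2+\rho^2-\tfrac{\mu}{2}$; its real part is \eqref{ODE:div} and its imaginary part gives $T(\omega)=-(\text{div}\,T)\,\omega$, hence \eqref{ODE:twist}. Equation \eqref{eqn:S2}, with $\kappa=0$ and $\text{Ric}(\mm,\mm)=0$, becomes $T(\sigma)=(2\vep+\rho+\bar\rho)\sigma$; using $\rho+\bar\rho=-\text{div}\,T$, that $\vep$ is purely imaginary, and the identity $T(|\sigma|^2)=2\,\text{Re}\big(\bar\sigma\,T(\sigma)\big)$ yields \eqref{ODE:shear}. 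For $f$, substituting the components into the second Bianchi identity \eqref{eqn:bid2} annihilates every term save $-T(\text{Ric}(\mm,\mb))$ on the left and $-(\rho+\bar\rho)\big(\text{Ric}(T,T)+\text{Ric}(\mm,\mb)\big)$ on the right, giving $-T(f)=(\text{div}\,T)(f-\mu)$, which is \eqref{ODE:h}. Finally, from the matrix \eqref{eqn:matrix} one computes $\det D=\tfrac14(\text{div}\,T)^2+\tfrac14\omega^2-|\sigma|^2$; then \eqref{ODE:H1} is merely \eqref{ODE:div} rewritten through $H=\det D-\tfrac{\mu}{2}$, and \eqref{ODE:H2} follows by applying $T$ to $H$ and substituting \eqref{ODE:div}, \eqref{ODE:twist}, and \eqref{ODE:shear}---a purely algebraic verification.

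I expect the main obstacle to be bookkeeping rather than conception: correctly tracking the Lorentzian sign changes (the red terms) and the conjugation rules $T(\bar\sigma)=\overline{T(\sigma)}$ and $\text{Ric}(T,\mb)=\overline{\text{Ric}(T,\mm)}$ when the complex Sachs equations are split into real and imaginary parts. Once $\kappa=0$ is in hand, each of \eqref{ODE:div}--\eqref{ODE:H2} is a direct substitution followed by a short algebraic reduction.
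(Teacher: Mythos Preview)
Your proposal is correct and follows essentially the same approach as the paper's proof: compute the Ricci components in the complex frame, use the first Bianchi identity \eqref{eqn:bid} together with $f\neq\mu$ to force $\kappa=0$, then read off \eqref{ODE:div}--\eqref{ODE:twist} from the real and imaginary parts of \eqref{eqn:S1}, \eqref{ODE:shear} from \eqref{eqn:S2}, \eqref{ODE:h} from \eqref{eqn:bid2}, and finally obtain \eqref{ODE:H1}--\eqref{ODE:H2} algebraically from $\det D=\tfrac14(\text{div}\,T)^2+\tfrac14\omega^2-|\sigma|^2$. Your explicit remark that $\text{Ric}(T,T)=-\mu$ is constant (so its frame derivatives vanish in \eqref{eqn:bid} and \eqref{eqn:bid2}) makes transparent a point the paper uses implicitly.
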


\begin{proof}
Let $\{T,\xx,\yy\}$ be an orthonormal frame, with $\xx,\yy$ possibly only locally defined, and let $\{T,\mm,\mb\}$ be the corresponding complex frame (recall that $\text{div}\,T, \omega^2$, and $|\sigma|^2$ are globally defined, frame-independent quantities). Then the Ricci tensor in this complex frame satisfies
$$
\text{Ric}(T,T) = -\mu \comma \underbrace{\,\text{Ric}(\mm,\mb)\,}_{\frac{1}{2}(\text{Ric}(X,X)\,+\, \text{Ric}(Y,Y))} \!\!\!\!\!\!\!\!\!=\ f,
$$
with all other components vanishing.  That $T$ has geodesic flow, $\cd{T}{T} = 0$, now follows from the differential Bianchi identity \eqref{eqn:bid}, which reduces to
$$
\kappa \underbrace{\,(\text{Ric}(\mm,\mb) + \text{Ric}(T,T))\,}_{f\,-\,\mu\,\neq\,0} = 0 \imp \kappa = 0.
$$
Since $\kappa = 0$ if and only if $\cd{T}{T} = 0$, this proves the geodesic flow of $T$. Next, \eqref{ODE:div} and \eqref{ODE:twist} are the real and imaginary parts of \eqref{eqn:S1}, respectively, after setting $\kappa = 0$ therein. With $\kappa = \text{Ric}(\mm,\mm) = 0$, \eqref{eqn:S2} also simplifies, to
$$
\kk(\sigma) = 2\sigma\vep + \sigma\!\!\underbrace{\,(\rho + \bar{\rho})\,}_{-\text{div}\,T},
$$
from which \eqref{ODE:shear} follows because $|\sigma|^2 = \sigma\bar{\sigma}$ and $\vep + \bar{\vep} = 0$. The second differential Bianchi identity \eqref{eqn:bid2} yields 
$$
-T(f-\mu/2) = (\text{div}\,T)(-\mu+f),
$$
which is \eqref{ODE:h}, since $T(f-\mu/2) = T(f-\mu)$. Finally, as
$$
\text{det}\,D = \frac{\omega^2}{4} - |\sigma|^2 + \frac{(\text{div}\,T)^2}{4},
$$
\eqref{ODE:H1} and \eqref{ODE:H2} both follow from \eqref{ODE:div}-\eqref{ODE:shear}.
\end{proof}

An immediate consequence of these evolution equations is the following

\begin{corollary-non}
\label{cor:1}
Assume the hypotheses of Proposition \ref{prop:1}.  If $\mu \geq 0$ and $M$ is closed, then $T$ is also divergence-free.
\end{corollary-non}

\begin{proof}
By Proposition \ref{prop:1}, $T$ has geodesic flow; because $M$ is closed, this flow is complete.  We now consider the cases $\mu > 0$ and $\mu = 0$ separately:
\begin{enumerate}[leftmargin=*]

\item[1.] $\mu > 0$: To show that $\text{div}\,T = 0$, we will use \eqref{ODE:H2}, by showing that $H$ is in fact a nonzero constant on $M$.  Indeed, suppose that $H(p) = 0$ at some point $p \in M$, and let $\gamma^{(p)}(t)$ be the (complete) integral curve of $T$ starting at $p$.  By \eqref{ODE:H2}, the function $H \circ \gamma^{(p)}\colon \RR \lra \RR$ is identically zero; by \eqref{ODE:H1}, $\theta(t) \defeq (\text{div}\,T\circ \gamma^{(p)})(t)$ satisfies
\beqa
\label{eqn:divmu}
\frac{d\theta}{dt} = -\theta^2 + 2\mu.
\eeqa
With $\mu > 0$, this has complete solutions $\theta(t) = \sqrt{2\mu}\,\text{tanh}\big(\sqrt{2\mu}\,t+c\big)$ and $\theta(t) = \pm \sqrt{2\mu}$. But in fact all of these solutions are impermissible, as can be seen by restricting \eqref{ODE:h} to $\gamma^{(p)}$. Indeed, substituting $\theta(t) = \pm\sqrt{2\mu}$ into \eqref{ODE:h} yields the solutions
$$
f(t)-\mu = (f(0)-\mu)e^{\mp\sqrt{2\mu}\,t}.
$$
Likewise, the solution $\theta(t) = \sqrt{2\mu}\,\text{tanh}\big(\sqrt{2\mu}\,t+c\big)$, when it is inserted into \eqref{ODE:h}, yields the solution
$$
f(t)-\mu = \big(f(0)-\mu\big)\text{sech} \big(\sqrt{2\mu}\,t+c\big).
$$
But in either case, the right-hand side goes to zero whereas the left-hand side is bounded away from zero (recall that $M$ is closed and $f$ never takes the value $\mu$). Thus $H$ must be nowhere vanishing on $M$, in which case, consider $1/H$:
\beqa
\label{eqn:Hmu}
T(1/H) \overset{\eqref{ODE:H2}}{=} \frac{\text{div}\,T}{H} \imp T(T(1/H)) \overset{\eqref{ODE:H1}}{=} 2 +\frac{2\mu}{H}\cdot
\eeqa
The latter equation has solution
\beqa
\label{eqn:HH1}
(1/H)(t) = -\frac{1}{\mu} + c_1e^{\sqrt{2\mu}\,t} + c_2 e^{-\sqrt{2\mu}\,t},\nonumber
\eeqa
but unless $c_1 = c_2 = 0$, this solution is unbounded.  We therefore conclude that the function $H$ is a nonzero constant on $M$, which immediately implies that $\text{div}\,T = 0$ by \eqref{ODE:H2}.

\item[2.] $\mu = 0$: Once again, suppose that $H(p)  = 0$, so that $H \circ \gamma^{(p)}\colon \RR \lra \RR$ is identically zero, in which case \eqref{ODE:H1} now becomes
$$
\frac{d\theta}{dt} = -\theta^2.
$$ 
This has $\theta(t) = 0$ as its only complete solution. If $H(p) \neq 0$, so that $H \circ \gamma^{(p)}\colon \RR \lra \RR$ is nowhere vanishing, then applying \eqref{eqn:Hmu} along $\gamma^{(p)}$ yields $(1/H)''(t) = 2$, hence
$$
(1/H)(t) = t^2+c_1t + c_2 \overset{\eqref{eqn:Hmu}}{\imp} \theta(t) = \frac{2t+c_1}{t^2+c_1t + c_2},
$$
with $c_1^2 < 4c_2^2$ to ensure that $(1/H)(t)$ is nowhere vanishing. But then \eqref{ODE:h} would yield
$$
f(t) = \frac{c_3}{t^2+c_1t + c_2},
$$
which is impossible because $f$, never taking the value $\mu = 0$, must be bounded away from $0$ on closed $M$. We conclude that $H$ must be the zero function, and so $\text{div}\,T = 0$ once again.
\end{enumerate}
This completes the proof.
\end{proof}

(As an aside, it is instructive to consider what happens when $\mu < 0$. In this case, if $H \circ \gamma^{(p)}\colon \RR \lra \RR$ is identically zero, then
$$
\frac{d\theta}{dt} = -\theta^2 + 2\mu
$$ 
has no complete solutions.   Thus $H$ must be nowhere vanishing on $M$, but this time \eqref{eqn:Hmu} yields
$$
(1/H)(t) = -\frac{1}{\mu} + c_1\sin(\sqrt{2|\mu|}\,t) + c_2 \cos(\sqrt{2|\mu|}\,t)
$$
and \eqref{ODE:h}
$$
f(t) = \mu+\frac{c_3}{-\frac{1}{\mu} + c_1\sin(\sqrt{2|\mu|}\,t) + c_2 \cos(\sqrt{2|\mu|}\,t)}\cdot
$$
But both of these are well behaved for appropriate constants, so there is no contradiction.)  In any case, when $\mu > 0$, then a distinguished orthonormal frame appears, which plays a crucial role in the proof of our Theorem:

\begin{prop}
\label{prop:2}
Assume the hypotheses of Proposition \ref{prop:1}.  If $\mu > 0$ and $M$ is closed and simply connected, then there exists a global orthonormal frame $\{T,X,Y\}$ with respect to which the spin coefficients $\kappa, \rho, \beta, \sigma$ take the form
\beqa
\label{eqn:good}
\kappa = \rho = \beta = 0 \comma \sigma = -\sqrt{\frac{\mu}{2}}+ i\,\frac{\omega}{2}\cdot
\eeqa
In particular, $T$ is geodesic, divergence-free, and twist-free.
\end{prop}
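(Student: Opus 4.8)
The plan is to manufacture the distinguished frame directly from the eigenstructure of the endomorphism $D$ of $T^{\perp}$ in \eqref{eqn:DT}, and then to read off the spin coefficients by feeding that frame back into the Sachs equations \eqref{eqn:S1}--\eqref{eqn:S3}. I would begin by invoking Corollary \ref{cor:1} (the case $\mu>0$) to record that $T$ is geodesic, $\kappa=0$, and divergence-free, $\text{div}\,T=0$. Since $\text{div}\,T\equiv 0$ forces $T(\text{div}\,T)=0$, equation \eqref{ODE:div} collapses to $2|\sigma|^2=\tfrac{\omega^2}{2}+\mu$, so that $|\sigma|^2=\tfrac{\omega^2}{4}+\tfrac{\mu}{2}\ge\tfrac{\mu}{2}>0$; in particular the shear is nowhere zero. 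Computing $\det D$ from \eqref{eqn:matrix} with $\text{div}\,T=0$ gives $\det D=\tfrac{\omega^2}{4}-|\sigma|^2=-\tfrac{\mu}{2}$, so $D$ has trace $0$ and determinant $-\tfrac{\mu}{2}$; its characteristic polynomial is $\lambda^2-\tfrac{\mu}{2}$, with the two real eigenvalues $\pm\sqrt{\mu/2}$. Note that $|\sigma|^2$, $\omega^2$, and $\det D$ are frame-independent, so these eigenvalues are globally well defined.

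Because $\mu>0$, the eigenvalues are distinct, and the $+\sqrt{\mu/2}$–eigenspace is a smooth real line subbundle $\ell\subset T^{\perp}$. Here is where simple connectivity enters: $\ell$ is then orientable, so I may choose a global unit section $X$ of $\ell$ and let $Y$ be its $g$–orthogonal mate, producing a global orthonormal frame $\{T,X,Y\}$. The eigenrelation $\cd{X}{T}=\sqrt{\mu/2}\,X$ is, via \eqref{eqn:matrix} and $\text{div}\,T=0$, precisely the statement $\sigma_1=-\sqrt{\mu/2}$ and $\sigma_2=\tfrac{\omega}{2}$, i.e. $\sigma=-\sqrt{\mu/2}+i\,\tfrac{\omega}{2}$, which is \eqref{eqn:good}. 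The imaginary part of \eqref{eqn:S1} (with $\kappa=0$) gives $T(\omega)=0$, hence $T(\sigma)=0$; comparing with \eqref{eqn:S2}, which now reads $T(\sigma)=2\vep\sigma$, and using $\sigma\neq 0$, I conclude $\vep=0$. Thus $\{X,Y\}$ is parallel along $T$.

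The substantive content is the vanishing of $\beta$ and the twist-free assertion $\omega=0$ (equivalently $\rho=-\tfrac{i\omega}{2}=0$). With $\kappa=\vep=0$ the bracket relations of Section \ref{sec:NP} give $[T,X]=-\sqrt{\mu/2}\,X$ and $[T,Y]=\sqrt{\mu/2}\,Y-\omega X$. Differentiating $\omega$ and using $T(\omega)=0$ then yields the linear first-order equations $T\!\big(X(\omega)\big)=-\sqrt{\mu/2}\,X(\omega)$ and $T\!\big(Y(\omega)\big)=\sqrt{\mu/2}\,Y(\omega)$ along the geodesic flow of $T$, which is complete because $M$ is closed. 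Each coefficient therefore evolves like $e^{\mp\sqrt{\mu/2}\,t}$ along integral curves; since a continuous function on a closed manifold is bounded, both $X(\omega)$ and $Y(\omega)$ must vanish identically, and with $T(\omega)=0$ this makes $\omega$ constant. Substituting $X(\omega)=0$ into \eqref{eqn:S3}, which has simplified to $2\sigma\bar\beta=\mm(\rho)-\mb(\sigma)=-\tfrac{i}{\sqrt2}\,X(\omega)$, forces $\beta=0$. This is the same ``curvature-to-ODE-along-the-flow'' mechanism already used to establish $\text{div}\,T=0$ in Corollary \ref{cor:1}.

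The hard part is to upgrade ``$\omega$ is constant'' to ``$\omega=0$.'' The purely local Newman--Penrose data are consistent with any constant value of the twist, so this vanishing cannot be read off from \eqref{eqn:S1}--\eqref{eqn:S4} alone and must come from a global input. My intended route is to observe that, once $\beta=0$ and $\omega$ is constant, the connection is completely determined and the linearization of the flow of $T$ on $T^{\perp}$ carries the two real, nonzero eigenvalues $\pm\sqrt{\mu/2}$ for every $\omega$, so this flow is hyperbolic; since a closed, simply connected $3$–manifold is a homotopy sphere, such a (contact-Anosov–type) flow is dynamically inadmissible, and it is this incompatibility that I expect to force $\omega=0$ and, ultimately, to feed the nonexistence argument of Section \ref{sec:proof}. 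I therefore regard the clean extraction of $\omega=0$ --- as opposed to its mere constancy, which the completeness argument settles immediately --- as the principal obstacle in the proof.
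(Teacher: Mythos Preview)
Your first two paragraphs reproduce the paper's proof essentially verbatim: invoke the Corollary to get $\kappa=0$ and $\text{div}\,T=0$; reduce \eqref{ODE:div} to $2|\sigma|^2-\tfrac{\omega^2}{2}=\mu$ so that $D$ has the two distinct real eigenvalues $\pm\sqrt{\mu/2}$; use simple connectivity to pick a global unit section $X$ of the $+\sqrt{\mu/2}$--eigenline, complete to an orthonormal frame, and read off $\sigma_1=-\sqrt{\mu/2}$, $\sigma_2=\tfrac{\omega}{2}$ from $D(X)=\sqrt{\mu/2}\,X$. That is exactly where the paper's proof \emph{stops}.

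Everything after that in your proposal --- the derivation of $\vep=0$, the bracket argument forcing $X(\omega)=Y(\omega)=0$, the conclusion $\beta=0$, and the discussion of $\omega=0$ --- is absent from the paper's proof of this proposition. If you look at how Proposition~\ref{prop:2} is actually used in Section~\ref{sec:proof}, the paper carries $\rho=-i\tfrac{\omega}{2}$ (not $\rho=0$) in \eqref{eqn:good2}, derives $\vep=0$ there rather than here, and treats $\beta=\tfrac{1}{\sqrt{2}}(\text{div}\,X-i\,\text{div}\,Y)$ together with $\omega$ and $X(\omega)$ as live, generally nonzero quantities throughout \eqref{eqn:*}--\eqref{eqn:compact}. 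In other words, the assertions ``$\rho=\beta=0$'' and ``twist-free'' in the statement are overstatements relative to what the paper's own argument proves and what the downstream application needs; the contradiction in the main Theorem comes from \eqref{eqn:compact}, not from forcing the twist to vanish. Your instinct that $\omega=0$ cannot be squeezed out of the local Newman--Penrose data alone is correct, and the paper never does so. (A small slip in your side argument: $[T,Y]=\sqrt{\mu/2}\,Y-\omega X$ gives $T(Y(\omega))=\sqrt{\mu/2}\,Y(\omega)-\omega\,X(\omega)$, so one must first conclude $X(\omega)=0$ from the exponential blow-up and only then deduce $Y(\omega)=0$; the conclusion that $\omega$ is constant survives.)
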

\begin{proof}
By Proposition \ref{prop:1}, $T$ has geodesic flow, so that $\kappa  = 0$; by its Corollary, $T$ is also divergence-free, so that $\rho = - i\frac{\omega}{2}$ (recall \eqref{eqn:rho2}).  Now we show the existence of a local orthonormal frame $\{T,X,Y\}$ with respect to which the shear $\sigma$ takes the form \eqref{eqn:good}.  Begin by observing that when $\kappa = \text{div}\,T = 0$, \eqref{ODE:div} reduces to
$$
2|\sigma|^2 - \frac{\omega^2}{2} = \mu.
$$
This in turn implies that the endomorphism $D\colon T^{\perp} \lra T^{\perp}$, whose matrix is given by \eqref{eqn:matrix}, has the two distinct eigenvalues $\pm\sqrt{\mu/2}$ (note that $D$ is self-adjoint with respect to the induced (Riemannian) metric $g|_{T^{\perp}}$ on $T^{\perp}$ if and only if $\omega$ vanishes).  Therefore, consider the respective kernels of the two bundle endomorphisms $D \pm \sqrt{\mu/2} I\colon T^{\perp} \lra T^{\perp}$; if $M$ is simply connected, then these line bundles have smooth nowhere vanishing global sections $X,Y_1$,
$$
D(X) = \sqrt{\mu/2}\,X \comma D(Y_1) = -\sqrt{\mu/2}\,Y_1,
$$
which we can take to have unit length, and which are both spacelike because they are orthogonal to $T$. If necessary, modify $Y_1$ so that it is orthogonal to $X$, by defining
$$
Y \defeq -g(X,Y_1)X+Y_1
$$
and normalizing $Y$ to have unit length.  We now claim that the global orthonormal frame $\{T,X,Y\}$ has shear $\sigma$ given by \eqref{eqn:good}; indeed, substituting \eqref{eqn:matrix} into $D(X) = \sqrt{\mu/2}\,X$ yields
\beqa
\label{eqn:wnot0}
\begin{pmatrix}-\sigma_1\\ \sigma_2-\frac{\omega}{2}\end{pmatrix} = \sqrt{\mu/2}\begin{pmatrix}1\\0\end{pmatrix} \imp \sigma_1 = -\sqrt{\mu/2} \comma \sigma_2 = \frac{\omega}{2},
\eeqa
completing the proof.
\end{proof}

\section{Proof of Theorem}
\label{sec:proof}
Armed with Propositions \ref{prop:1}, \ref{prop:2}, and our Corollary above, we now prove our Theorem, starting with the $\lambda > 0$ case:

\begin{proof}[\textcolor{blue}{Proof of $\lambda > 0$ case of Theorem.}]
(\emph{Once the sign changes (in red) are accounted for, this proof and Proposition \ref{prop:2} above are identical to that of Theorem 1 in \cite{AMB}}.)  Suppose a closed Lorentzian 3-manifold $(M,g)$ exists satisfying \eqref{eqn:1}, with $\lambda > 0$.  By the Corollary to Proposition \ref{prop:1}, $\text{div}\,T = 0$.  Now assume that $M$ is simply connected; then by Proposition \ref{prop:2}, there exists a global orthonormal frame $\{T,X,Y\}$ satisfying 
\beqa
\label{eqn:good2}
\kappa = 0 \comma \rho = -i\frac{\omega}{2} \comma \sigma = -\sqrt{\frac{\lambda}{2}} + i\frac{\omega}{2}\cdot
\eeqa
Using this information, we now show the existence of a vector field $Z$ and a smooth function $\psi$ on $M$ satisfying
\beqa
\label{eqn:Zf}
Z(\psi) = f,
\eeqa
which is impossible, as $f$ never takes the value $0$ and $M$ is closed. This will be shown by inserting \eqref{eqn:good2} into \eqref{eqn:S2}, \eqref{eqn:S3}, and \eqref{eqn:S4}. Indeed, doing so yields, in order,
$$
\underbrace{\,T(\sigma)\,}_{0} \overset{\eqref{eqn:S2}}{=} 2\sigma \vep \imp \sigma \vep = 0 \imp \vep = 0,\nonumber
$$
where  $T(\sigma) = 0$ because $\text{div}\,T = 0$, hence $T(\omega^2) = 0$ in \eqref{ODE:twist}; next, because $\beta = \frac{1}{\sqrt{2}}\big(g(\cd{Y}{X},Y) -ig(\cd{X}{Y},X)\big) = \frac{1}{\sqrt{2}}(\text{div}\,X - i\,\text{div}\,Y)$ (since $\cd{T}{T} = 0$),
\beqa
\underbrace{\,\mm(\rho) - \mb(\sigma)\,}_{-\frac{i}{\sqrt{2}}X(\omega)} \overset{\eqref{eqn:S3}}{=} 2\sigma \bar{\beta} \imp \left\{\begin{array}{c}
\sqrt{2\lambda}\,\text{div}\,X + \omega\,\text{div}\,Y = 0,\\
\sqrt{2\lambda}\,\text{div}\,Y - \omega\,\text{div}\,X = X(\omega).
\end{array}\label{eqn:*}
\right.
\eeqa
Finally, \eqref{eqn:S4}, when simplified using $\vep = 0$, yields
\beqa
\underbrace{\,\mm(\bar{\beta}) + \mb(\beta)\,}_{X(\text{div}\,X)\,+\,Y(\text{div}\,Y)} &=& \underbrace{\,\textcolor{red}{-}|\sigma|^2\,\textcolor{red}{+}\, |\rho|^2\,}_{-\lambda/2} -\, 2|\beta|^2 \,\textcolor{red}{-}\underbrace{\,(\rho - \bar{\rho})\vep\,}_{0}\nonumber\\
&&\hspace{.7in}- \underbrace{\,{\rm Ric}(\mm,\mb)\,}_{f} \,\textcolor{red}{-}\, \frac{1}{2}\underbrace{\,{\rm Ric}(\kk,\kk)\,}_{-\lambda}\nonumber\\
&=& -(\text{div}\,X)^2 - (\text{div}\,Y)^2 - f.\label{eqn:almost}
\eeqa
This further simplifies,
\beqa
X(\text{div}\,X) &\overset{\eqref{eqn:*}}{=}& -\frac{1}{\sqrt{2\lambda}}X(\omega)\,\text{div}\,Y -\frac{\omega}{\sqrt{2\lambda}}\,X(\text{div}\,Y)\nonumber\\
&\overset{\eqref{eqn:*}}{=}& -(\text{div}\,Y)^2 + \underbrace{\,\frac{\omega}{\sqrt{2\lambda}}(\text{div}\,X)(\text{div}\,Y)\,}_{-(\text{div}\,X)^2} -\ \frac{\omega}{\sqrt{2\lambda}}\,X(\text{div}\,Y),\nonumber
\eeqa
so that \eqref{eqn:almost} reduces, finally, to
\beqa
\label{eqn:compact}
\left(\frac{\omega}{\sqrt{2\lambda}}X - Y\right)(\text{div}\,Y) = f.
\eeqa
With $Z := \frac{\omega}{\sqrt{2\lambda}}X - Y$ and $\psi := \text{div}\,Y$, this is precisely \eqref{eqn:Zf}. This proves the Theorem in the case when $M$ is simply connected.  If $M$ is not simply connected, then pass to its simply connected universal cover $(\widetilde{M},\tilde{g})$; it is locally isometric to $(M,g)$ via the projection $\pi\colon \widetilde{M} \lra M$, and therefore its Ricci tensor $\widetilde{\text{Ric}}$ will satisfy \eqref{eqn:1}, with $f \circ \pi$ in place of $f$, and with $\widetilde{T}$ the (complete) lift of $T$.  Repeating step-by-step our argument on $(\widetilde{M},\tilde{g})$, we arrive once again at \eqref{eqn:compact}. Although $\widetilde{M}$ need not be compact, a contradiction is still obtained because $f \circ \pi$ is bounded away from zero, because $\text{div}\,Y$ is also bounded (since $d\pi(Y)$ is well defined up to sign, $|\text{div}\,d\pi(Y)|$ is continuous on $M$), and because $Z$ is complete on $\widetilde{M}$.  This completes the proof.
\end{proof}

\begin{proof}[\textcolor{blue}{Proof of $\lambda = 0$ case of Theorem.}]
(\emph{This proof parallels that of Theorem 3 in \cite{AMB}, but generalizes it: due to our Corollary above, the function $f$ is not assumed to be constant, as it was in \cite{AMB}; furthermore, \eqref{eqn:hh} below is a necessary step that was not required in \cite{AMB}.}) When $\lambda  = 0$, 
\beqa
\label{eqn:Tpart1}
|\sigma|^2 - \frac{\omega^2}{4}  = 0 \imp D~\text{has eigenvalues}~0,0.
\eeqa
As there are no longer two distinct eigenvalues, we cannot call upon Proposition \ref{prop:2}; instead, we prove that if \eqref{eqn:Tpart1} holds then $T$ must be parallel.  Doing so will then allow us to draw two conclusions: first, that $(M,g)$ must be geodesically complete, by \cite{SanRom}; second, that the universal cover of $(M,g)$ must be isometric to $(\RR \times \widetilde{N},-dt^2\oplus \tilde{h})$ for some Riemannian 2-manifold $(\widetilde{N},\tilde{h})$, by the de Rham Decomposition Theorem for Lorentzian manifolds \cite{Wu}. To begin with, observe that $T$ being parallel is equivalent to the  condition
$$
\kappa =  \rho = \sigma = 0.
$$
By Proposition \ref{prop:1} and its Corollary, $\cd{T}{T} = \text{div}\,T = 0$, so that we need only show that $\omega^2 = 0$; we'll do this by showing that the open set
$$
U = \{p \in M : \omega^2(p) \neq 0\}
$$
is empty. Assume for the moment that $U$ is simply connected. Then over $U$, $D$ has constant rank 1 (recall \eqref{eqn:matrix}), so that its kernel is a line bundle over $U$; as the latter is simply connected, this kernel has a nowhere vanishing section $X$ on $U$. Now let $\{T,X,Y\}$ be an orthonormal frame, with $Y$ perhaps only locally defined in $U$. Then the analogue of \eqref{eqn:wnot0} is now
$$
\begin{pmatrix}-\sigma_1\\ \sigma_2-\frac{\omega}{2}\end{pmatrix} = \begin{pmatrix}0\\0\end{pmatrix} \imp \sigma_1 = 0 \comma \sigma_2 = \frac{\omega}{2},
$$
and thus the analogue of \eqref{eqn:good} is $\rho = -i\frac{\omega}{2}$ and $\sigma = i\frac{\omega}{2}.$  Proceeding as in the proof of the $\lambda > 0$ case above, \eqref{eqn:*} becomes
\beqa
\label{eqn:**}
\omega (\text{div}\,Y) = 0 \comma X(\omega) = -\omega\,(\text{div}\,X).
\eeqa
Now $\text{div}\,Y = 0$ because $\omega$ is nowhere vanishing in $U$, in which case \eqref{eqn:almost} reduces to
\beqa
\label{eqn:divX}
X(\text{div}\,X) = -(\text{div}\,X)^2  - f.
\eeqa
If the flow of $X$ was complete in $U$, then we would obtain a contradiction because \eqref{eqn:divX} has no complete solutions when $f > 0$.  Our task is therefore done if we can prove that the flow of $X$ is complete. Thus, let $\gamma\colon [0,b) \lra U$ be an integral curve of $X$ that is maximally extended to the right, and suppose that $b < \infty$ (the case $(-b,0]$ will follow from this one by letting $X \to -X$, which leaves \eqref{eqn:**} and \eqref{eqn:divX} unaltered).  To begin with, there is a sequence $t_n \to b$ such that $\{\gamma(t_n)\}$ converges to some $q \in M\backslash U$ (if $q$ were in $U$, then the integral curve $\gamma$ would be extendible, contradicting our assumption that it was maximally extended; see, e.g., \cite[Lemma 56]{o1983}). Let us give a proof of this that will also suffice should we need to pass to the universal cover of $M$ below: consider the Riemannian metric $h$ on $M$ defined by
\beqa
\label{eqn:hh}
h \defeq g + 2T^{\flat}\otimes T^{\flat}. 
\eeqa
Since $M$ is closed, $h$ is complete; as $X$ has $h$-unit length,
$$
d_h(\gamma(0),\gamma(t)) \leq L_h(\gamma|_{[0,t]}) = t,
$$
where $d_h$ is the Riemannian distance associated to $h$.  This implies that
$$
\gamma([0,b)) \subseteq \{p \in M : d_h(\gamma(0),p) \leq b\}.
$$
By the completeness of $(M,h)$, the latter set is compact, hence any sequence $\{\gamma(t_n)\}$ with $t_n \to b$ has a convergent subsequence; cf. \cite[Proposition 3.4]{candela}.  Now, set $\theta(t) \defeq (\text{div}\,X \circ \gamma)(t)$ and $\omega^2(t) \defeq (\omega^2 \circ \gamma)(t)$; in particular, observe that $\lim_{n\to \infty} \omega^2(t_n) = 0$ because $q\notin U$.  By \eqref{eqn:**},
$$
\omega^2(t) = \omega_0^2\,e^{-2\!\int_0^t \theta(s)\,ds}\hspace{.2in}\text{for all}~t \in [0,b).
$$
By \eqref{eqn:divX}, $\theta(t)$ is nonincreasing ($f > 0$), in which case
$
-2\!\int_0^t \theta(s)\,ds \geq -2\theta_0t \geq -2\theta_0b$ for all $t \in [0,b).$
But then
$$
\omega^2(t) \geq w_0^2e^{-2\theta_0b} > 0 \imp \lim_{n \to \infty} w^2(t_n) > 0,
$$
a contradiction. Thus we must have $b = \infty$; this proves that $U$, if simply connected, must be empty.  If $U$ is not simply connected, then pass to the universal covers of $(U,g|_U) \subset (M,g)$ and repeat the argument (with $\omega^2 \circ \pi$, and noting that the lift of the Riemannian metric \eqref{eqn:hh} will be complete), noting that any integral curve of $T$ starting in $U$ stays in $U$, because $T(\omega^2)  = 0$ via \eqref{ODE:twist}.  This completes the proof that the universal cover of $(M,g)$ is isometric to $(\RR \times \widetilde{N},-dt^2\oplus \tilde{h})$, in which case $(M,g)$ itself is isometric to $(\mathbb{S}^1 \times N,-dt^2 \oplus h)$ with $(N,h)$ a Riemannian 2-manifold. This establishes the $\lambda = 0$ case of the Theorem.
\end{proof}

\section*{Acknowledgements}
We kindly thank the anonymous referee for a careful reading of this manuscript, and Miguel S\'anchez for very helpful discussions.

\bibliographystyle{alpha}
\bibliography{JMAA_final_version}
\end{document}